\documentclass[10pt,reqno]{amsart}
\usepackage{amsmath,amssymb}
\usepackage{enumitem}
\usepackage{graphics,color}
\usepackage{epsfig}
\usepackage{graphicx}
\usepackage{amsfonts,amsmath,amssymb,amscd}
\setbox0=\hbox{$+$}
\newdimen\plusheight
\plusheight=\ht0
\def\+{\;\lower\plusheight\hbox{$+$}\;}

\setbox0=\hbox{$-$}
\newdimen\minusheight
\minusheight=\ht0
\def\-{\;\lower\minusheight\hbox{$-$}\;}

\setbox0=\hbox{$\cdots$}
\newdimen\cdotsheight
\cdotsheight=\plusheight
\def\cds{\lower\cdotsheight\hbox{$\cdots$}}

\renewcommand{\(}{\left\(}
\renewcommand{\)}{\right\)}

\renewcommand{\pmod}[1]{\,(\textup{mod}\,#1)}
\theoremstyle{plain}
\newtheorem{theorem}{Theorem}[section]

\theoremstyle{definition}

\theoremstyle{remark}
\newtheorem{remark}[theorem]{Remark}
\numberwithin{equation}{section}

\begin{document}
\title{ A $q$-Analogue of a Supercongruence Related to Van Hamme's (B.2) Supercongruence  }
\author{Liton Karmakar and Arijit Jana}

\address{Department of Mathematics, National Institute of Technology Silchar, Assam - 788010, India}
\email{litonofficial8638@gmail.com}

\address{Department of Mathematics, National Institute of Technology Silchar, Assam - 788010, India}
\email{jana94arijit@gmail.com}

\subjclass[2010]{33D15 · 11B65 · 05A10.}
\keywords{ $q$-Analogue, $q$-WZ Method, $q$-Supercongruence}
\begin{abstract}
	Motivated by the recent work of Li and Wang on parametric generalizations of Van Hamme's $(C.2)$ supercongruence in the $q$-setting, we establish $q$-analogues of a supercongruence related to Van Hamme's $(B.2)$ supercongruence, recently obtained by the authors. In particular, we derive parametric extensions of these $q$-supercongruences by constructing suitable pairs of hypergeometric functions through the $q$-WZ method.

\end{abstract}

\maketitle
\section{Introduction and statement of results}
For complex numbers $a$ and $q$ with $|q|<1$, the $q$-shifted factorial is defined by $$(a;q)_0:=1, \text{
for~} n\geq 1, (a;q)_n:=\prod_{k=0}^{n-1}(1-aq^k), \text{~and~}(a;q)_\infty := \prod_{k=0}^{\infty}(1-aq^k).$$
The $q$-shifted factorial for negative index is defined as
\begin{center}
    $ (a;q)_{-n}:= \dfrac{1}{\left(1- \frac{a}{q}\right) \left(1-\frac{a}{q^2}\right) \cdots \left(1-\frac{a}{q^{-n}}\right)}.$
\end{center}
The $m$-th cyclotomic polynomial $\Phi_m(q)$ is defined by
$$\Phi_m(q):=\prod_{\substack{1\leq j\leq m\\ \gcd(j,m)=1}}
\left(q-\mu^j\right),$$
where $\mu$ denotes a primitive $m$-th root of unity. It is well known that
$\Phi_m(q)$ is a polynomial in $q$ with integer coefficients. Furthermore,
$$\prod_{\substack{d\mid m\\ d>1}}\Phi_d(q)=[m].$$
The $q$-integer $[m]=[m]_q$ is defined by $
[m]=1+q+q^2+\cdots+q^{\,m-1}
=\frac{1-q^m}{1-q}.$
\par In $1997$, Van Hamme \cite{hamme} conjectured that the following Ramanujan-type series for $1/\pi$,
\begin{equation}\label{ident1}
\sum_{n=0}^{\infty} (-1)^n(4n+1)
\frac{\left(\frac{1}{2}\right)_n^3}{n!^3}
=
\frac{2}{\pi},
\end{equation}
admits the following remarkable $p$-adic analogue:
\begin{equation}\label{padic1}
(B.2)\qquad
\sum_{n=0}^{(p-1)/2}
(-1)^n(4n+1)
\frac{\left(\frac{1}{2}\right)_n^3}{n!^3}
\equiv
(-1)^{\frac{p-1}{2}}p
\pmod{p^3},
\end{equation}
where $(a)_n=a(a+1)\cdots(a+n-1)$ denotes the Pochhammer symbol. The supercongruence \eqref{padic1} was first proved by Mortenson \cite{mortenson} using a ${}_6F_5$ hypergeometric identity. Subsequently, alternative proofs were given by Zudilin \cite{zudilin} via the WZ method \cite{wilf}, and by Long \cite{long1} using hypergeometric series identities and evaluations.
Later, employing the $q$-WZ method, Guo \cite{Guo2018 JMAA} established the following $q$-analogue of \eqref{padic1}: for any positive odd integer $m$,
\begin{equation}\label{guoq}
\sum_{n=0}^{(m-1)/2}
(-1)^n [4n+1]
\frac{(q;q^2)_n^3}{(q^2;q^2)_n^3}
q^{n^2}
\equiv
(-q)^{(m-1)^2/4}[m]
\pmod{[m]\Phi_m(q)^2}.
\end{equation}


In \cite{guo}, Guo also established the following generalization of \eqref{padic1} by employing the powerful WZ method:
\begin{equation}\label{eq2}
\sum_{n=0}^{\frac{p-1}{2}}
(-1)^n(4n+1)^3
\frac{\left(\frac{1}{2}\right)_n^3}{n!^3}
\equiv
-3p(-1)^{\frac{p-1}{2}}
\pmod{p^2}.
\end{equation}
In the same paper, Guo further conjectured a family of supercongruences related to \eqref{eq2} as follows: for any odd prime $p$ and positive odd integer $h,$ there exists an integer $a_h$ such that, for any positive integer $u,$ there hold 

\begin{equation} \label{Con eqn 1}
    \displaystyle \sum_{n=0}^{\frac{p^{u}-1}{2}} (-1)^n(4n+1)^h \frac{\left(\frac{1}{2}\right)_n^3}{n!^3} \equiv a_h p^{u} (-1)^{\frac{(p-1)u}{2}}\pmod{p^{u+2}},
\end{equation}
where $a_1=1, a_3=-3, a_5=41, a_7=-1595, a_9=124689, a_{11}=-16253107.$ The cases $u=1, h=3, h=5$ and $h=7$ of \eqref{Con eqn 1} have been proved by Guo \cite{guo} using certain hypergeometric series identities and the WZ-method.  
Subsequently, Jana and Kalita \cite{jana1} obtained the following extension of \eqref{eq2}, thereby confirming the case $h=3$ of Conjecture~\eqref{Con eqn 1}: for any positive integer $u$,

\begin{equation}\label{eq3}
\sum_{n=0}^{\frac{p^u-1}{2}}
(-1)^n(4n+1)^3
\frac{\left(\frac{1}{2}\right)_n^3}{n!^3}
\equiv
-3p^u(-1)^{\frac{(p-1)u}{2}}
\pmod{p^{u+2}}.
\end{equation}
If Conjecture~\eqref{Con eqn 1} holds for both $h=1$ and $h=3$, then one observes that
\begin{equation}\label{remark 1}
\sum_{n=0}^{\frac{p^{u}-1}{2}}
(-1)^n(4n+1)(4n^2+2n+1)
\frac{\left(\frac{1}{2}\right)_n^3}{n!^3}
\equiv 0
\pmod{p^{u+2}},
\end{equation}
since
$(4n+1)^3+3(4n+1)=4(4n+1)(4n^2+2n+1).$
Motivated by this observation, Guo \cite{guo} proposed a considerably stronger supercongruence conjecture related to \eqref{remark 1}; see \cite[Conjecture~4.2]{guo}. The conjecture was later proved by Jana and Kalita \cite{jana8} in 2021.
In 2025, the authors \cite{Liton2025 RJ} established a parametric extension of \eqref{remark 1}, which may be viewed as a refinement of \cite[Conjecture~4.2]{guo} involving an additional parameter $s$. Their main result can be stated as follows.

\begin{theorem} [Theorem 1.1, \cite{Liton2025 RJ}] \label{thmjk1}
Let $\ell \geq 2$ and $p$ be an odd prime with $p \equiv -1 \pmod{\ell}.$ Let $s$ be a non-negative integer such that $p > 2\ell s - 1.$ Then
\begin{align*}
&\sum_{n=s}^{\frac{(\ell-1)p-1}{\ell}+s}
(-1)^n (2\ell n+1)
\bigl(\ell^2n^2+\ell n-\ell^2s^2+1\bigr)
\frac{\left(\frac{1}{\ell}\right)_{n+s}
      \left(\frac{1}{\ell}\right)_{n-s}
      \left(\frac{1}{\ell}\right)_n}
     {(1)_{n+s}(1)_{n-s}(1)_n}
\equiv 0 \pmod{p^3}.
\end{align*}
\end{theorem}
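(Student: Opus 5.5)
We prove Theorem~\ref{thmjk1} by the WZ method, following the same strategy the paper will later use in the $q$-setting. Write
\[
F(n,k)=(-1)^{n}\,(2\ell n+1)\bigl(\ell^2n^2+\ell n-\ell^2k^2+1\bigr)\frac{\left(\frac1\ell\right)_{n+k}\left(\frac1\ell\right)_{n-k}\left(\frac1\ell\right)_n}{(1)_{n+k}(1)_{n-k}(1)_n},
\]
summed over $n$, and view $k=s$ as a parameter.

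\textbf{Step 1: WZ pair.} We look for a companion $G(n,k)$ with
\[
F(n,k-1)-F(n,k)=G(n+1,k)-G(n,k),
\]
where $G(n,k)$ is a hypergeometric term in $n$ and $k$ of the form
\[
G(n,k)=(-1)^n\,R(n,k)\,\frac{\left(\frac1\ell\right)_{n+k-1}\left(\frac1\ell\right)_{n-k}\left(\frac1\ell\right)_n}{(1)_{n+k-1}(1)_{n-k}(1)_{n-1}},
\]
with $R$ a low-degree polynomial (probably constant or linear in $n$). The Gosper/Zeilberger ansatz determines $R$. Verifying the identity reduces to a rational-function check after dividing by the common hypergeometric factor.

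\textbf{Step 2: Telescope in $n$ and reduce to $k=0$.} Set $N=\frac{(\ell-1)p-1}{\ell}$. Summing the WZ relation over $n$ from $0$ to $N+k$ telescopes the right side to boundary terms. Note that $F(n,k)=0$ for $n<k$, since $1/(1)_{n-k}=0$. This gives
\[
\sum_n F(n,k)=\sum_n F(n,k-1)-G(N+k+1,k)+G(0,k)-\bigl(\text{the single extra term }F(N+k,k-1)\bigr).
\]
Iterating from $k=s$ down to $k=0$ expresses the target sum as $\sum_{n=0}^{N}F(n,0)$ plus a finite collection of boundary terms.

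\textbf{Step 3: $p$-adic estimates.} Since $p\equiv-1\pmod\ell$, write $\frac1\ell=\frac{1+p\cdot a}{\ell}$-type; concretely, $\left(\frac1\ell\right)_n$ acquires its first factor of $p$ at $n=N+1$, because $\frac1\ell+N=\frac{(\ell-1)p}{\ell}$. In the range $N<n\le N+s$ each of the three Pochhammer quotients in $G(N+k+1,k)$ and $F(N+k,k-1)$ carries a factor $p$. The condition $p>2\ell s-1$ guarantees that no denominator $(1)_{n\pm k}$ reaches $p$, so each boundary term is $\equiv 0\pmod{p^3}$. The term $G(0,k)$ vanishes by the factor $1/(1)_{-1}$.

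The base case $k=0$, namely
\[
\sum_{n=0}^{N}(-1)^n(2\ell n+1)(\ell^2n^2+\ell n+1)\frac{\left(\frac1\ell\right)_n^3}{n!^3}\equiv0\pmod{p^3},
\]
is the $s=0$ instance. It is handled the same way by a second WZ pair in a free parameter, or by the known result of Jana--Kalita~\cite{jana8} generalized to $\ell$. Concretely, we introduce $F(n,j)$ with $\left(\frac1\ell\right)_{n+j}$ shifts and telescope in $j$ down to a closed form, as in Guo's and Zudilin's proofs of (B.2).

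\textbf{Main obstacle.} The main obstacle is Step 1 together with the base case: finding an explicit $G$ whose boundary values carry a full $p^3$. We would first try $R$ linear in $n$ with coefficients depending on $k$, solved for symbolically. If the $k=0$ sum resists, we would insert an extra parameter $a$, replacing $\left(\frac1\ell\right)_n^3$ by $\left(\frac1\ell\right)_n\left(\frac1\ell-a\right)_n\left(\frac1\ell+a\right)_n$-type factors. We would then exploit that the truncated sum is a polynomial in $a$ vanishing at enough points, which gives creative-microscoping-style divisibility.
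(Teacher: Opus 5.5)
There is a genuine gap. Your Steps 1--3 only move the problem from general $s$ to $s=0$. The base case $\sum_{n=0}^{N}(-1)^n(2\ell n+1)(\ell^2n^2+\ell n+1)(\frac1\ell)_n^3/n!^3\equiv0\pmod{p^3}$ is the full statement at $s=0$, and you never prove it: the ``second WZ pair'', creative microscoping, and Jana--Kalita ``generalized to $\ell$'' are only named, not carried out.

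The reduction itself is sound. The companion exists, with $R$ constant in $n$:
\[
G(n,k)=(-1)^{n-1}\ell^2(2k-1)\frac{(\frac1\ell)_{n+k-1}(\frac1\ell)_{n-k}(\frac1\ell)_n}{(1)_{n+k-1}(1)_{n-k}(1)_{n-1}}.
\]
Your boundary estimates are also correct. (Relative to $\sum_{n\le N+k-1}F(n,k-1)$, the extra term $F(N+k,k-1)$ enters with a plus sign, which is harmless modulo $p^3$.) But all the arithmetic content sits in the unproved base case.

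The missing idea is that the summand is already a first-order difference in $n$ for each fixed $s$. This is exactly what the paper's WZ pair gives when used only at $k=1$, i.e.\ Theorem~\ref{Result 1} at $q\to1$, $t=1$. Let $T(n)$ be the hypergeometric quotient in the statement and put
\[
S(n)=(-1)^n\frac{(1+\frac1\ell)_{n+s}(1+\frac1\ell)_{n-s}(1+\frac1\ell)_n}{(1)_{n+s}(1)_{n-s}(1)_n}=(-1)^n(1+\ell n+\ell s)(1+\ell n-\ell s)(1+\ell n)\,T(n).
\]
Then $S(n-1)=(-1)^{n-1}\ell^3n(n^2-s^2)T(n)$, so $S(s-1)=0$. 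With $x=\ell n$ and $y=\ell s$, the identity $(1+x)^3-y^2(1+x)+x^3-xy^2=(1+2x)(1+x+x^2-y^2)$ shows that the summand equals $S(n)-S(n-1)$. Hence the sum is exactly $S(N+s)$.

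Now read off the $p$-adic valuation:
\begin{itemize}
\item Each of $(1+\frac1\ell)_{N+2s}$, $(1+\frac1\ell)_N$ and $(1+\frac1\ell)_{N+s}$ contains the factor $\frac1\ell+N=\frac{(\ell-1)p}{\ell}$.
\item $p\nmid\ell$, so the powers of $\ell$ in the denominators are $p$-adic units.
\item $(N+2s)!$ is prime to $p$, because $p>2\ell s-1$ forces $N+2s\le p-1$.
\end{itemize}
This gives $p^3$ for every $s\ge0$ at once, base case included, with no induction on $s$. In fact, writing $S_k$ for $S$ with $s$ replaced by $k$, your $G$ is simply $S_{k-1}(n-1)-S_k(n-1)$.
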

In the same paper, the authors derived several parametric generalizations of the Ramanujan-type supercongruence (labeled $(C.2)$) using the Zeilberger algorithm. Recently, Li and Wang \cite{LiWang C2} established $q$-analogues of these supercongruences, thereby extending the supercongruence related to $(C.2)$ to the setting of $q$-series.
\par Inspired by their work\rq s, we here establish an another new parametric extension of \eqref{remark 1} as follows: let $\ell \geq 2$ and $t$ be integers, and $u,s$ non-negative integers satisfying $p^{u} \equiv -t \pmod{\ell}$ and $p^u > 2 \ell s - t.$ Then
\begin{align} \label{Main Ext}
		&\displaystyle \sum_{n=s}^{\frac{\left(\ell - 1\right)p^{u} -t}{\ell}+s} \left(-1\right)^{n} \left(2\ell n +t\right) \left(\ell^2n^2 + t \ell n -\ell^2 s^2 + t^2 \right) \frac{\left(\frac{t}{\ell}\right)_{n+s} \left(\frac{t}{\ell}\right)_{n-s} \left(\frac{t}{\ell}\right)_{n}}{\left(1\right)_{n+s} \left(1\right)_{n-s} \left(1\right)_{n}}
		\equiv 0 \pmod{p^{u+2}}.
	\end{align}
\begin{remark}
   If we set $u=1$ and $t=1$ in \eqref{Main Ext}, then congruence \eqref{Main Ext} reduces to Theorem \ref{thmjk1}.
\end{remark}
The main objective of this paper is to prove congruence \eqref{Main Ext}. For this, we shall deduce a $q$-supercongruence in the following theorem.

\begin{theorem} \label{Result 3}
	Let $t$ and $\ell \geq 2 $ be integers, and $m$ a positive integer with $m \equiv -t \pmod{\ell}$ and \textup{gcd}$\left(m,\ell\right)=1$. Let $s$ be a non-negative integer such that $m > 2 \ell s - t.$ Then 
	\begin{align*} 
		\displaystyle  \sum_{n=s}^{\frac{\left(\ell - 1\right)m -t}{\ell}+s} & (-1)^n [2\ell n +t]
\frac{(q^t;q^\ell)_{n+s} (q^t;q^\ell)_{n-s} (q^t;q^\ell)_{n}}
{(q^\ell;q^\ell)_{n+s} (q^\ell;q^\ell)_{n-s} (q^\ell;q^\ell)_{n}} \notag \\
& \times \{q^{n \left(\frac{\ell}{2}  \left(n+1\right) -t \right)} \left[t+\ell s\right]\left[t-\ell s\right] + q^{\left(n-1\right) \left(\frac{\ell n}{2} -t \right)} \left[\ell n\right]\left[t+\ell n\right] \} \equiv  0 \pmod {[m]\Phi_m(q)^2}.
	\end{align*} 
\end{theorem}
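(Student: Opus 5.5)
The plan is to use the $q$-WZ method, as the abstract indicates. Write the summand as $F(n,k)$-type data: define, for non-negative integers $n$ and $k$,
\[
F(n,k)=(-1)^n[2\ell n+t]\frac{(q^t;q^\ell)_{n+s}(q^t;q^\ell)_{n-s}(q^t;q^\ell)_{n+k}}{(q^\ell;q^\ell)_{n+s}(q^\ell;q^\ell)_{n-s}(q^\ell;q^\ell)_{n-k}\,(q^{\ell};q^\ell)_k^2\,(q^{t};q^\ell)_k^{?}}\cdot(\text{powers of }q),
\]
that is, a $k$-deformation of the summand in which $(q^t;q^\ell)_n/(q^\ell;q^\ell)_n$ is replaced by a ratio $(q^t;q^\ell)_{n+k}/(q^\ell;q^\ell)_{n-k}$ times a normalizing factor in $k$. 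The quadratic bracket $\{\cdots\}$ in Theorem~\ref{Result 3} should emerge as $F(n,0)$ after combining $F$ with a companion term, exactly as $\ell^2n^2+t\ell n-\ell^2s^2+t^2$ splits into $[t+\ell s][t-\ell s]+[\ell n][t+\ell n]$ in the $q$-world. I would find a partner $G(n,k)$ (a rational multiple of $F$, found by the $q$-Zeilberger/Gosper algorithm) with $F(n,k-1)-F(n,k)=G(n+1,k)-G(n,k)$. Summing over $n$ from $s$ to $N=\frac{(\ell-1)m-t}{\ell}+s$ gives
\[
\sum_{n}F(n,0)=\sum_n F(n,K)+\sum_{k=1}^{K}\bigl(G(N+1,k)-G(s,k)\bigr),
\]
with $K$ chosen as $(m-t)/\ell$ or similar so that $F(n,K)$ has few nonzero terms (the factor $1/(q^\ell;q^\ell)_{n-k}$ kills $n<k$).

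Next I would show each piece is $\equiv0\pmod{[m]\Phi_m(q)^2}$. Since $m\equiv -t\pmod\ell$ and $\gcd(m,\ell)=1$, the factor $(q^t;q^\ell)_j$ acquires a factor $1-q^{m}$ (hence $\Phi_m(q)$) once $j>(m-t)/\ell\cdot$—precisely at index $j_0=(m-t)/\ell+1$ when $t<m$—while $(q^\ell;q^\ell)_j$ has no $\Phi_m$ factor for $j<m$. Hence products like $(q^t;q^\ell)_{n+s}(q^t;q^\ell)_{n-s}(q^t;q^\ell)_{n+k}$ contain $\Phi_m(q)^3$ or at least $\Phi_m(q)^2$ over most of the range, the hypothesis $m>2\ell s-t$ ensuring the upper limit $N$ stays below the denominator's first $\Phi_m$ factor. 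Boundary terms $G(s,k)$ vanish or are handled by $(q^t;q^\ell)_{k}$ factors; $G(N+1,k)$ contains $[2\ell N+\dots]$-type factors or triple $q$-shifted factorials giving $\Phi_m^3$. The remaining $F(n,K)$ terms reduce to a short explicit sum which should be divisible by $[m]$ thanks to $[2\ell n+t]$ evaluating to a $[m]$-multiple at the relevant $n$ (e.g. $2\ell n+t=m$ type coincidences) combined with two more $\Phi_m$ factors.

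Finally, to upgrade from modulo $\Phi_m(q)^3$ to $[m]\Phi_m(q)^2$, I would argue as Guo does: for each divisor $d\mid m$, $d>1$, the same analysis with $m$ replaced by $d$ (noting $d$ need not satisfy $d\equiv -t$, so instead use that the sum truncated at $N$ splits into blocks) shows divisibility by $\Phi_d(q)$, and since the $\Phi_d$ are coprime, $[m]=\prod\Phi_d$ divides the sum; combined with $\Phi_m^3$ this gives $[m]\Phi_m(q)^2$. The main obstacle I expect is the first step: actually discovering the WZ pair $(F,G)$ with the right $q$-power weights, given the two-term bracket with exponents $n(\frac{\ell}{2}(n+1)-t)$ and $(n-1)(\frac{\ell n}{2}-t)$; I would first compute it for $s=0$ via $q$-Gosper and then insert the shifts $n\pm s$, checking the identity by dividing both sides by $F(n,k)$ and verifying a rational-function identity.
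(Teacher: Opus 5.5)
There is a genuine gap. Your proposal does not get past the obstacle you flag yourself: no WZ pair is produced, and the $k$-normalization is left as $(q^\ell;q^\ell)_k^2(q^t;q^\ell)_k^{?}$. The rest of the plan is to treat the whole summand as $F(n,0)$, sum the WZ relation over $k=1,\dots,K$, and then show that $\sum_nF(n,K)$, $G(N+1,k)$ and $G(s,k)$ are each divisible by $[m]\Phi_m(q)^2$. That is a list of hoped-for divisibilities, not arguments.

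Moreover, the cyclotomic bookkeeping behind it is wrong. Since $\gcd(m,\ell)=1$ and $t\equiv -m\pmod{\ell}$, also $\gcd(t,\ell)=1$. Hence the first factor of $(q^t;q^\ell)_j$ divisible by $\Phi_m(q)$ is $1-q^{(\ell-1)m}$, reached at $j=M+1$ with $M=\frac{(\ell-1)m-t}{\ell}$. It is not $1-q^m$ at $j=\frac{m-t}{\ell}+1$; that index agrees with $M+1$ only for $\ell=2$ and is not even an integer for $\ell\ge3$. Consequently, for $s\le n\le M-s$ none of $(q^t;q^\ell)_{n+s}$, $(q^t;q^\ell)_{n-s}$, $(q^t;q^\ell)_{n}$ contains $\Phi_m(q)$. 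So the summands do not carry ``$\Phi_m(q)^3$ or at least $\Phi_m(q)^2$ over most of the range''. The congruence comes from cancellation across the whole sum, which no termwise count can detect.

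The missing idea is already latent in your remark that $\ell^2n^2+t\ell n-\ell^2s^2+t^2$ splits as $[t+\ell s][t-\ell s]+[\ell n][t+\ell n]$. Take
\[
\mathcal{P}(n,k)=(-1)^{n+k}[2\ell n+t]\,q^{(n-k)(\frac{\ell}{2}(n-k+1)-t)}\,\frac{(q^t;q^\ell)_{n+s}(q^t;q^\ell)_{n-s}(q^t;q^\ell)_{n+k}}{(q^\ell;q^\ell)_{n+s}(q^\ell;q^\ell)_{n-s}(q^\ell;q^\ell)_{n-k}(q^t;q^\ell)_{k+s}(q^t;q^\ell)_{k-s}}.
\]
After multiplying by $(q^t;q^\ell)_{1+s}(q^t;q^\ell)_{1-s}/(1-q)^2$, the two bracket terms are exactly $\mathcal{P}(n,0)$ and $-\mathcal{P}(n,1)$. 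The ratios $(q^t;q^\ell)_{1\pm s}/(q^t;q^\ell)_{\pm s}$ produce $[t\pm\ell s]$, and passing from $(q^t;q^\ell)_n/(q^\ell;q^\ell)_n$ to $(q^t;q^\ell)_{n+1}/(q^\ell;q^\ell)_{n-1}$ produces $[\ell n][t+\ell n]$. So the $k$-normalization must be $1/((q^t;q^\ell)_{k+s}(q^t;q^\ell)_{k-s})$.

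With $\mathcal{Q}(n,k)=\mathcal{P}(n,k)\frac{(1-q^{\ell(n+s)})(1-q^{\ell(n-s)})}{(1-q^{2\ell n+t})(1-q^{t+\ell(n+k-1)})}$ one has $\mathcal{P}(n,k-1)-\mathcal{P}(n,k)=\mathcal{Q}(n+1,k)-\mathcal{Q}(n,k)$, and only $k=1$ is needed. Summing over $n$ from $s$ to $\epsilon$ telescopes, since $\mathcal{Q}(s,1)=0$, to the exact evaluation
\[
\sum_{n=s}^{\epsilon}(\cdots)=(-1)^{\epsilon}q^{\epsilon(\frac{\ell}{2}(\epsilon+1)-t)}[t]^3\,\frac{(q^{\ell+t};q^\ell)_{\epsilon+s}(q^{\ell+t};q^\ell)_{\epsilon-s}(q^{\ell+t};q^\ell)_{\epsilon}}{(q^{\ell};q^\ell)_{\epsilon+s}(q^{\ell};q^\ell)_{\epsilon-s}(q^{\ell};q^\ell)_{\epsilon}}
\]
for every $\epsilon\ge s$. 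There is no sum over $k$ and no leftover $\sum_nF(n,K)$.

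At $\epsilon=M+s$, each of the three numerator products contains $1-q^{(\ell-1)m}$. Every denominator index is at most $M+2s<m$, which is exactly the hypothesis $m>2\ell s-t$, so $\gcd(m,\ell)=1$ keeps the denominators free of $\Phi_m(q)$. This gives $\Phi_m(q)^3$.

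Your last step, divisibility by each $\Phi_d(q)$ with $d\mid m$, $d>1$, via block splitting, matches the paper's. With the product formula in hand you can also read it off directly. Each ratio $(q^{\ell+t};q^\ell)_X/(q^\ell;q^\ell)_X$ has nonnegative $\Phi_d$-order, and the one with $X=M$ has order one unless $d\mid t$, in which case $[t]^3$ supplies $\Phi_d(q)$.
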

Setting $q \to 1$ and $m=p^u$ in the above congruence, then we arrive at the congruence \eqref{Main Ext}.\\
\par \quad The paper is organized as follows. In Section~2, we derive a closed-form summation formula by constructing an appropriate $q$-WZ pair of rational functions via the $q$-WZ method of Wilf and Zeilberger \cite{wilf}. This summation formula serves as the main tool in the proof of our principal theorem.
\section{Proof of Theorem \ref{Result 3} }
To prove Theorem~\ref{Result 3}, we first establish the following Theorem.
\begin{theorem}\label{Result 1}
	Let $\ell \geq 2$,   $\epsilon \geq s\geq  0$ and $t$ be integers. Then
	\begin{align*}
		\displaystyle \sum_{n=s}^{\epsilon} &\left(-1\right)^{n} \left[2\ell n +t\right]  \frac{\left(q^t;q^\ell\right)_{n+s} \left(q^t;q^\ell\right)_{n-s} \left(q^t;q^\ell\right)_{n}}{\left(q^\ell;q^\ell\right)_{n+s} \left(q^\ell;q^\ell\right)_{n-s} \left(q^\ell;q^\ell\right)_{n}}\\
        \times & \{q^{n \left(\frac{\ell}{2}  \left(n+1\right) -t \right)} \left[t+\ell s\right]\left[t-\ell s\right] + q^{\left(n-1\right) \left(\frac{\ell n}{2} -t \right)} \left[\ell n\right]\left[t+\ell n\right] \} \\
		&= \left(-1\right)^{\epsilon} \frac{ q^{\epsilon \left(\frac{\ell}{2}  \left(\epsilon+1\right) -t \right)} \left[t\right]^3 \left(q^{\ell+t};q^\ell\right)_{\epsilon+s} \left(q^{\ell+t};q^\ell\right)_{\epsilon-s} \left(q^{\ell+t};q^\ell\right)_{\epsilon}}{\left(q^{\ell};q^\ell\right)_{\epsilon+s} \left(q^{\ell};q^\ell\right)_{\epsilon-s} \left(q^{\ell};q^\ell\right)_{\epsilon}}.
	\end{align*}
	
\end{theorem}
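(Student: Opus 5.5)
This is a finite closed-form summation in $\epsilon$, so I will prove it by induction on $\epsilon$, i.e.\ by telescoping. Write $a_n$ for the summand on the left-hand side and $R(\epsilon)$ for the right-hand side.

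Put
$$G(n)=(-1)^n q^{n(\frac{\ell}{2}(n+1)-t)}[t]^3\frac{(q^{\ell+t};q^\ell)_{n+s}(q^{\ell+t};q^\ell)_{n-s}(q^{\ell+t};q^\ell)_{n}}{(q^\ell;q^\ell)_{n+s}(q^\ell;q^\ell)_{n-s}(q^\ell;q^\ell)_{n}},$$
so that $R(\epsilon)=G(\epsilon)$. Since $(q^\ell;q^\ell)_{n-s}$ with $n-s<0$ contains the factor $(q^\ell;q^\ell)_{-1}^{-1}=1-q^0=0$ in the sense of the negative-index convention of the introduction, we have $1/(q^\ell;q^\ell)_{-1}=0$. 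Hence $G(s-1)=0$, and the claim reduces to two checks:
\begin{itemize}
\item the base case $a_s=G(s)$;
\item the recurrence $a_n=G(n)-G(n-1)$ for $n>s$.
\end{itemize}
In fact the base case is the recurrence at $n=s$, because $G(s-1)=0$.

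To check the recurrence, I will use the identity $(q^{\ell+t};q^\ell)_k=(q^t;q^\ell)_{k+1}/(1-q^t)$. This turns $G(n)$ into $(-1)^n q^{(\cdot)}\,\frac{[t]^3}{(1-q^t)^3}$ times the product $\prod (q^t;q^\ell)_{k+1}/(q^\ell;q^\ell)_k$ over $k\in\{n+s,n-s,n\}$. Since $[t]^3/(1-q^t)^3=(1-q)^{-3}$, both $G(n)$ and $G(n-1)$ become the common factor
$$T_n=\frac{(q^t;q^\ell)_{n+s}(q^t;q^\ell)_{n-s}(q^t;q^\ell)_{n}}{(q^\ell;q^\ell)_{n+s}(q^\ell;q^\ell)_{n-s}(q^\ell;q^\ell)_{n}}$$
times explicit rational factors:
\begin{itemize}
\item $G(n)/T_n$ carries $(1-q^{t+\ell(n+s)})(1-q^{t+\ell(n-s)})(1-q^{t+\ell n})$ over $(1-q)^3$;
\item $G(n-1)/T_n$ carries $(1-q^{\ell(n+s)})(1-q^{\ell(n-s)})(1-q^{\ell n})$ over $(1-q)^3$, with a sign flip and the exponent $q^{(n-1)(\frac{\ell}{2}n-t)}$.
\end{itemize}

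After dividing by $(-1)^nT_n$ and multiplying by $(1-q)^3$, the recurrence becomes the polynomial identity
$$(1-q^{2\ell n+t})\bigl\{q^{A}(1-q^{t+\ell s})(1-q^{t-\ell s})+q^{B}(1-q^{\ell n})(1-q^{t+\ell n})\bigr\}$$
$$=q^{A}(1-q^{t+\ell n+\ell s})(1-q^{t+\ell n-\ell s})(1-q^{t+\ell n})+q^{B}(1-q^{\ell n+\ell s})(1-q^{\ell n-\ell s})(1-q^{\ell n}),$$
where $A=n(\frac{\ell}{2}(n+1)-t)$ and $B=(n-1)(\frac{\ell n}{2}-t)$, so that $A-B=\ell n-t$. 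I will verify this by substituting $x=q^{\ell n}$, $y=q^{\ell s}$, $z=q^t$ and multiplying through by $q^{-B}$. Both sides are then Laurent polynomials in $x,y,z$; the terms involving $y$ appear only through $y+y^{-1}$, so they can be matched directly. The only bookkeeping subtlety is the $q^{\pm t}$ factors coming from $q^{A-B}=xz^{-1}$.

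The main obstacle is carrying out this verification cleanly. I will do it by expanding both sides and grouping by powers of $x$. If it fails, that would signal a misreading of the exponents $A,B$ (for example a $q^{\ell s^2}$-type normalization), and I would then re-derive the correct $G(n)$ by running the $q$-Gosper/Zeilberger algorithm on the summand.

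Finally, for $n=s$ the recurrence still applies, because the factor $(1-q^{\ell(n-s)})$ in $G(n-1)/T_n$ vanishes, consistent with $G(s-1)=0$. Summing the recurrence from $n=s$ to $\epsilon$ then gives the theorem.
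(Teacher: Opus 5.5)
Your proof is correct and is essentially the paper's argument: your antidifference $G(n)$ is exactly the paper's $\frac{(q^t;q^\ell)_{1+s}(q^t;q^\ell)_{1-s}}{(1-q)^2}\mathcal{Q}(n+1,1)$. So your recurrence $a_n=G(n)-G(n-1)$ is the $k=1$ slice of the paper's $q$-WZ relation, which you verify directly rather than through the two-parameter pair, and this also avoids the auxiliary denominators $(q^t;q^\ell)_{k\pm s}$. The polynomial identity you reduce to does hold: after dividing by $q^B$, both sides expand to $1+xz^{-1}-x-(y+y^{-1})x-x^3+x^3z+(y+y^{-1})x^3z-x^4z^2$.
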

\begin{proof}
     For $\ell \geq 2$ and, integers $n,s,k \geq 0$ satisfying
$n \geq s$ and $n \geq k,$ we consider the following two hypergeometric function $\mathcal{P}(n,k)$ and $\mathcal{Q}(n,k)$ in $n$ and $k;$
	\begin{center}
		$\displaystyle \mathcal{P}(n,k)=  (-1)^{n+k}\left[2\ell n+t \right] \frac{{q^{\left(n-k\right) \left(\frac{\ell}{2}  \left(n-k+1\right) -t \right)}\left (q^t;q^\ell \right)}_{n+s}{\left (q^t;q^\ell \right)}_{n-s}{\left (q^t;q^\ell \right)}_{n+k}}{{\left (q^\ell;q^\ell \right)}_{n+s}{\left (q^\ell;q^\ell \right)}_{n-s}{\left (q^\ell;q^\ell \right)}_{n-k}{\left (q^t;q^\ell \right)}_{k+s}{\left (q^t;q^\ell \right)}_{k-s}}$
	\end{center}
	and
	\begin{center}
		$\displaystyle \mathcal{Q}(n,k)=  (-1)^{n+k} \frac{{q^{\left(n-k\right) \left(\frac{\ell}{2}  \left(n-k+1\right) -t \right)}\left (q^t;q^\ell \right)}_{n+s}{\left (q^t;q^\ell \right)}_{n-s}{\left (q^t;q^\ell \right)}_{n+k-1}}{{\left(1-q\right)\left (q^\ell;q^\ell \right)}_{n+s-1}{\left (q^\ell;q^\ell \right)}_{n-s-1}{\left (q^\ell;q^\ell \right)}_{n-k}{\left (q^t;q^\ell \right)}_{k+s}{\left (q^t;q^\ell \right)}_{k-s}},$	
	\end{center}
	where $1 \slash \left(q^\ell;q^\ell\right)_{y}=0$ for $y < 0.$ 
The motivation of taking the above $q$-WZ pair is based on the work of \cite{Guo2018 JMAA, Guo2019 RJ}. Then we see that
	
	\begin{align*}
	\displaystyle \frac{\mathcal{P}(n,k)}{\mathcal{Q}(n,k)}
=
\frac{(1-q^{2\ell n +t})(1-q^{t+\ell(n+k-1)})}
     {(1-q^{\ell(n+s)})(1-q^{\ell(n-s)})},
	\end{align*}
	\begin{align*}
	\displaystyle \frac{\mathcal{Q}(n+1,k)}{\mathcal{Q}(n,k)}
=
-\frac{
q^{\ell(n-k+1)-t}
(1-q^{t+\ell(n+s)})
(1-q^{t+\ell(n-s)})
(1-q^{t+\ell(n+k-1)})
}{
(1-q^{\ell(n+s)})
(1-q^{\ell(n-s)})
(1-q^{\ell(n-k+1)})
},
	\end{align*}	
	and
	\begin{align*}
	\frac{\mathcal{P}(n,k-1)}{Q(n,k)}=
    -\frac{
q^{\ell(n-k+1)-t}
(1-q^{2\ell n +t})
(1-q^{t+\ell(k+s-1)})
(1-q^{t+\ell(k-s-1)})
}
{
(1-q^{\ell(n+s)})
(1-q^{\ell(n-s)})
(1-q^{\ell(n-k+1)})
}.
	\end{align*}
From above we have the following connection:
	\begin{center}
		$\mathcal{P}(n,k-1) - \mathcal{P}(n,k)= \mathcal{Q}(n+1,k) - \mathcal{Q}(n,k).$
	\end{center}
	Pre-multiplying both sides of the above Equation by $\frac{\left(q^t;q^\ell\right)_{k+s} \left(q^t;q^\ell\right)_{k-s}}{\left(1-q\right)^2},$ we obtain
	\begin{align*}
	\displaystyle \frac{\left(q^t;q^\ell\right)_{k+s} \left(q^t;q^\ell\right)_{k-s}}{\left(1-q\right)^2} \left\{\mathcal{P}(n,k-1) - \mathcal{P}(n,k)\right\} & = \frac{\left(q^t;q^\ell\right)_{k+s} \left(q^t;q^\ell\right)_{k-s}}{\left(1-q\right)^2} \left\{\mathcal{Q}(n+1,k) - \mathcal{Q}(n,k)\right\}.
	\end{align*}
    Now, setting $k=1$ and summing both sides over $n$ from $s$ to $\epsilon$, we get
	\begin{align*}
	\displaystyle  \sum_{n=s}^{\epsilon} \frac{\left(q^t;q^\ell\right)_{1+s} \left(q^t;q^\ell\right)_{1-s}}{\left(1-q\right)^2} \left\{\mathcal{P}(n,0) - \mathcal{P}(n,1)\right\}  = \frac{\left(q^t;q^\ell\right)_{1+s} \left(q^t;q^\ell\right)_{1-s}}{\left(1-q\right)^2} \left\{\mathcal{Q}(\epsilon+1,1) - \mathcal{Q}(s,1)\right\}. 
	\end{align*}
		The left-hand side of the above expression implies  
		\begin{align*}
			&\displaystyle  \sum_{n=s}^{\epsilon} \frac{\left(q^t;q^\ell\right)_{1+s} \left(q^t;q^\ell\right)_{1-s}}{\left(1-q\right)^2} \left\{\mathcal{P}(n,0) - \mathcal{P}(n,1)\right\} \\
			& =\sum_{n=s}^{\epsilon}  \frac{{(-1)^{n}\left[2\ell n+t \right]\left (q^t;q^\ell \right)}_{n+s}{\left (q^t;q^\ell \right)}_{n-s}{\left (q^t;q^\ell \right)}_{n}}{{\left (q^\ell;q^\ell \right)}_{n+s}{\left (q^\ell;q^\ell \right)}_{n-s}{\left (q^\ell;q^\ell \right)}_{n}} \\  & \times  \left\{\frac{{q^{n \left(\frac{\ell}{2}  \left(n+1\right) -t \right)}\left (q^t;q^\ell \right)}_{1+s}{\left (q^t;q^\ell \right)}_{1-s}}{\left(1-q\right)^2{\left (q^t;q^\ell \right)}_{s}{\left (q^t;q^\ell \right)}_{-s}} \right. + \left. \frac{{q^{\left(n-1\right) \left(\frac{\ell n}{2} -t \right)}\left (q^t;q^\ell \right)}_{n+1}{\left (q^\ell;q^\ell \right)}_{n}}{\left(1-q\right)^2{\left (q^\ell;q^\ell \right)}_{n-1}{\left (q^t;q^\ell \right)}_{n}}  \right\} \\
			& = \displaystyle \sum_{n=s}^{\epsilon} \left(-1\right)^{n} \left[2\ell n +t\right] \frac{\left(q^t;q^\ell\right)_{n+s} \left(q^t;q^\ell\right)_{n-s} \left(q^t;q^\ell\right)_{n}}{\left(q^\ell;q^\ell\right)_{n+s} \left(q^\ell;q^\ell\right)_{n-s} \left(q^\ell;q^\ell\right)_{n}} \\
            & \times \{q^{n \left(\frac{\ell}{2}  \left(n+1\right) -t \right)} \left[t+\ell s\right]\left[t-\ell s\right] + q^{\left(n-1\right) \left(\frac{\ell n}{2} -t \right)} \left[\ell n\right]\left[t+\ell n\right] \}.
		\end{align*}

	 Again, from right-hand side we have
	\begin{align*}
	\frac{\left(q^t;q^\ell\right)_{1+s} \left(q^t;q^\ell\right)_{1-s}}{\left(1-q\right)^2} \left\{\mathcal{Q}(\epsilon+1,1) - \mathcal{Q}(s,1)\right\} &  = \left(-1\right)^{\epsilon} \frac{{q^{\epsilon \left(\frac{\ell}{2}  \left(\epsilon+1\right) -t \right)} \left (q^t;q^\ell \right)}_{\epsilon+s+1}{\left (q^t;q^\ell \right)}_{\epsilon-s+1} {\left (q^t;q^\ell \right)}_{\epsilon+1} }{\left(1-q\right)^4 {\left (q^\ell;q^\ell \right)}_{\epsilon+s}{\left (q^\ell;q^\ell \right)}_{\epsilon-s}  {\left (q^\ell;q^\ell \right)}_{\epsilon} }   \\
	& = \left(-1\right)^{\epsilon} \frac{ q^{\epsilon \left(\frac{\ell}{2}  \left(\epsilon+1\right) -t \right)} \left[t\right]^3 \left(q^{\ell+t};q^\ell\right)_{\epsilon+s} \left(q^{\ell+t};q^\ell\right)_{\epsilon-s} \left(q^{\ell+t};q^\ell\right)_{\epsilon}}{\left(q^{\ell};q^\ell\right)_{\epsilon+s} \left(q^{\ell};q^\ell\right)_{\epsilon-s} \left(q^{\ell};q^\ell\right)_{\epsilon}}.
	\end{align*}
	This completes the proof due to the fact that $\mathcal{Q}(s,1)=0$ as $1 \slash \left(q^\ell;q^\ell\right)_{y}=0$ for $y<0.$
\end{proof}
Now, we  establish the following weaker version of Theorem~\ref{Result 3}. 
\begin{theorem} \label{Result 2}
	Let $t$ and $\ell \geq 2 $ be integers, and $m$ a positive integer with $m \equiv -t \pmod{\ell}$ and \textup{gcd}$\left(m,\ell\right)=1$. Let $s$ be a non-negative integer such that $m > 2 \ell s - t.$ Then 
	\begin{align} 
		\displaystyle  \sum_{n=s}^{\frac{\left(\ell - 1\right)m -t}{\ell}+s} & (-1)^n [2\ell n +t]
\frac{(q^t;q^\ell)_{n+s} (q^t;q^\ell)_{n-s} (q^t;q^\ell)_{n}}
{(q^\ell;q^\ell)_{n+s} (q^\ell;q^\ell)_{n-s} (q^\ell;q^\ell)_{n}} \notag \\
& \times \{q^{n \left(\frac{\ell}{2}  \left(n+1\right) -t \right)} \left[t+\ell s\right]\left[t-\ell s\right] + q^{\left(n-1\right) \left(\frac{\ell n}{2} -t \right)} \left[\ell n\right]\left[t+\ell n\right] \} \equiv  0 \pmod {\Phi_m(q)^3}. \label{E 1}
	\end{align} 
\end{theorem}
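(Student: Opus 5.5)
Set $\epsilon=\frac{(\ell-1)m-t}{\ell}+s$ in Theorem~\ref{Result 1}. Since $m\equiv -t\pmod{\ell}$, $\epsilon$ is an integer, and the hypothesis $m>2\ell s-t$ (together with $m\ge 1$) gives $\epsilon\ge s$. The left-hand side of \eqref{E 1} then equals
\[
(-1)^{\epsilon} q^{\epsilon(\frac{\ell}{2}(\epsilon+1)-t)}[t]^3\,
\frac{(q^{\ell+t};q^\ell)_{\epsilon+s}(q^{\ell+t};q^\ell)_{\epsilon-s}(q^{\ell+t};q^\ell)_{\epsilon}}{(q^{\ell};q^\ell)_{\epsilon+s}(q^{\ell};q^\ell)_{\epsilon-s}(q^{\ell};q^\ell)_{\epsilon}} .
\]
It therefore suffices to show that each of the three quotients $(q^{\ell+t};q^\ell)_N/(q^\ell;q^\ell)_N$, for $N\in\{\epsilon+s,\epsilon-s,\epsilon\}$, contributes one factor of $\Phi_m(q)$ to the numerator and none to the denominator.

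For the numerators, note that $(q^{\ell+t};q^\ell)_N=\prod_{j=1}^{N}(1-q^{t+\ell j})$. Choose $j_0=\frac{(\ell-1)m-t}{\ell}$, so that $t+\ell j_0=(\ell-1)m$. Then $1-q^{(\ell-1)m}$ is divisible by $\Phi_m(q)$. Since $\epsilon-s=j_0\le \epsilon\le\epsilon+s$, the index $j_0$ lies in all three ranges (assuming $j_0\ge1$). This gives $\Phi_m(q)^3$ from the numerators. The degenerate case $j_0=0$ means $(\ell-1)m=t$, and then $[t]=[(\ell-1)m]$ already contains $\Phi_m(q)$, supplying the factors via $[t]^3$. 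I would treat this case separately, together with any sign issues when $t\le 0$ (negative $q$-integers are handled via $[-a]=-q^{-a}[a]$).

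For the denominators, $(q^\ell;q^\ell)_N=\prod_{j=1}^N(1-q^{\ell j})$, and $\Phi_m(q)\mid 1-q^{\ell j}$ iff $m\mid \ell j$ iff $m\mid j$, using $\gcd(m,\ell)=1$. The main obstacle is therefore to check that $\epsilon+s<m$. Now $\epsilon+s=\frac{(\ell-1)m-t+2\ell s}{\ell}$, so the condition is $(\ell-1)m-t+2\ell s<\ell m$, i.e.\ $m>2\ell s-t$, which is exactly the hypothesis. Hence no factor of $\Phi_m(q)$ occurs in any denominator. Finally, the prefactor $q^{\epsilon(\cdots)}$ is a unit up to a half-integer exponent, which is harmless when working in $\mathbb{Z}[q^{1/2},q^{-1/2}]$ or after observing parity. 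The sum is therefore congruent to $0$ modulo $\Phi_m(q)^3$.
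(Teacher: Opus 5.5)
Your argument is the paper's proof. You specialize Theorem~\ref{Result 1} at $\epsilon=\frac{(\ell-1)m-t}{\ell}+s$ and take the factor $1-q^{(\ell-1)m}$ from each of the three products $(q^{\ell+t};q^\ell)_N$. You then use $\gcd(m,\ell)=1$ and $\epsilon+s<m \Leftrightarrow m>2\ell s-t$ to keep $\Phi_m(q)$ out of the denominators. Your separate handling of $t=(\ell-1)m$ through $[t]^3$ is more careful than the paper. The paper claims that $(q^{\ell+t};q^\ell)_n$ has a factor divisible by $\Phi_m(q)$ for all $n\ge\frac{(\ell-1)m-t}{\ell}$, and that claim fails at $n=0$ when the bound is $0$.

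A few small corrections:

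\textbf{$\epsilon\ge s$ is not automatic.} The hypothesis $m>2\ell s-t$ is a lower bound on $t$, so it does not give $\epsilon\ge s$; that needs $t\le(\ell-1)m$. For instance, $\ell=2$, $m=3$, $t=5$, $s=0$ gives $\epsilon=-1$. When $t>(\ell-1)m$ the sum is empty, so dispose of that case first and invoke Theorem~\ref{Result 1} only when $\epsilon\ge s$.

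\textbf{The case $m=1$.} The step $\Phi_m(q)\mid[(\ell-1)m]$ requires $m\ge 2$, since $\Phi_1(q)=q-1$ divides no $[k]$. In fact the statement is false for $m=1$: with $\ell=2$, $t=1$, $s=0$ the sum is the single term $[1]^3=1$. So $m>1$ must be assumed; the paper omits this too.

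\textbf{Integer exponents.} The exponent $\frac{\ell}{2}\epsilon(\epsilon+1)-t\epsilon$ is always an integer because $\epsilon(\epsilon+1)$ is even. No half-integer powers of $q$ arise, so your hedge about $\mathbb{Z}[q^{1/2},q^{-1/2}]$ is unnecessary.
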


\begin{proof}
Setting $\epsilon = \frac{\left(\ell - 1\right)m -t}{\ell}+s$ in Theorem \ref{Result 1}, we obtain 

\begin{align*}
    \displaystyle & \sum_{n=s}^{\frac{\left(\ell - 1\right)m -t}{\ell}+s}  \left(-1\right)^{n} \left[2\ell n +t\right] \frac{\left(q^t;q^\ell\right)_{n+s} \left(q^t;q^\ell\right)_{n-s} \left(q^t;q^\ell\right)_{n}}{\left(q^\ell;q^\ell\right)_{n+s} \left(q^\ell;q^\ell\right)_{n-s} \left(q^\ell;q^\ell\right)_{n}} \\
    & \times \{q^{n \left(\frac{\ell}{2}  \left(n+1\right) -t \right)} \left[t+\ell s\right]\left[t-\ell s\right] + q^{\left(n-1\right) \left(\frac{\ell n}{2} -t \right)} \left[\ell n\right]\left[t+\ell n\right] \} \\
    &= \left(-1\right)^{\frac{\left(\ell - 1\right)m -t}{\ell}+s} \frac{ q^{\left(\frac{\left(\ell - 1\right)m -t}{\ell}+s \right)\left(\frac{\ell}{2}  \left(\frac{\left(\ell - 1\right)m -t}{\ell}+s+1\right) -t \right)} \left[t\right]^3 \left(q^{\ell+t};q^\ell\right)_{\frac{\left(\ell - 1\right)m -t}{\ell}+2s} \left(q^{\ell+t};q^\ell\right)_{\frac{\left(\ell - 1\right)m -t}{\ell}} \left(q^{\ell+t};q^\ell\right)_{\frac{\left(\ell - 1\right)m -t}{\ell}+s}}{\left(q^{\ell};q^\ell\right)_{\frac{\left(\ell - 1\right)m -t}{\ell}+2s} \left(q^{\ell};q^\ell\right)_{\frac{\left(\ell - 1\right)m -t}{\ell}} \left(q^{\ell};q^\ell\right)_{\frac{\left(\ell - 1\right)m -t}{\ell}+s}} \\
    & \equiv 0 \pmod {\Phi_m(q)^3}.
\end{align*}
The last expression holds because the $q$-shifted factorial $\left(q^{\ell +t}; q^\ell \right)_n$ contains a factor which is divisible by $\Phi_m(q)$ for $n \geq \frac{\left(\ell - 1\right)m -t}{\ell}$ whereas $\left(q^\ell;q^\ell\right)_n$ is divisible by $\Phi_m(q)$ if $n \geq m,$ and when $n<m$, $\left(q^\ell;q^\ell\right)_n$ is always relatively prime with $\Phi_m(q)$ for \textup{gcd}$\left(m,\ell\right)=1.$ The above congruence is true due to $m > 2\ell s -t.$
\end{proof}

We are now in a position to prove our main theorem.
\begin{proof}[Proof of Theorem \ref{Result 3}]
   By Theorem \ref{Result 2}, the $q$-supercongruence \eqref{E 1} is true modulo $\Phi_m(q)^3.$ To prove Theorem \ref{Result 3}, we just have to show that the congruence \eqref{E 1} holds modulo $[m]$ as we know that \textup{lcm}$\left([m],\Phi_m(q)^3\right)= [m] \Phi_m(q)^2.$ Equivalently, we want to prove that

    \begin{align} 
		\displaystyle  \sum_{n=0}^{\frac{\left(\ell - 1\right)m -t}{\ell}} & (-1)^{\left(n+s\right)} [2\ell n + 2 \ell s + t]
\frac{(q^t;q^\ell)_{n+2s} (q^t;q^\ell)_{n} (q^t;q^\ell)_{n+s}}
{(q^\ell;q^\ell)_{n+2s} (q^\ell;q^\ell)_{n} (q^\ell;q^\ell)_{n+s}} \notag \\
\times & \{q^{\left(n+s\right) \left(\frac{\ell}{2}  \left(n+ s + 1\right) -t \right)} \left[t+\ell s\right]\left[t-\ell s\right] + q^{\left(n + s -1\right) \left(\frac{\ell \left(n+s\right)}{2} -t \right)} \left[\left(n+s\right)\ell \right]\left[t+\left(n+s\right)\ell \right] \}  \equiv  0 \pmod {[m]}. \label{E 2}
	\end{align}
Following the proof of Theorem \ref{Result 2}, we observe that replacing the upper limit of summation of the above congruence \eqref{E 2} by $m-1$ give the following $q$-congruence:
    \begin{align} 
		\displaystyle  \sum_{n=0}^{m-1} & (-1)^{\left(n+s\right)} [2\ell n + 2 \ell s + t]
\frac{(q^t;q^\ell)_{n+2s} (q^t;q^\ell)_{n} (q^t;q^\ell)_{n+s}}
{(q^\ell;q^\ell)_{n+2s} (q^\ell;q^\ell)_{n} (q^\ell;q^\ell)_{n+s}} \notag \\
\times & \{q^{\left(n+s\right) \left(\frac{\ell}{2}  \left(n+ s + 1\right) -t \right)} \left[t+\ell s\right]\left[t-\ell s\right] + q^{\left(n + s -1\right) \left(\frac{\ell \left(n+s\right)}{2} -t \right)} \left[\left(n+s\right)\ell \right]\left[t+\left(n+s\right)\ell \right] \}  \equiv  0 \pmod {\Phi_m(q)}. \label{E 3} 
	\end{align}

    \par Let $\mu \neq 1$ be an $m$-th root of unity, not necessarily primitive. This means that $\mu$ is a primitive root of unity of degree $j$ such that $j$ divides $m$ and $j>1.$ There exists an integer $w$ with $0 \leq w \leq j-1$ and $\ell w \equiv -t \pmod{ j}.$ Let $A_{q}(n)$ stand for the following expression:
    \begin{align*}
      A_{q}(n) = &  (-1)^{\left(n+s\right)} [2\ell n + 2 \ell s + t]
\frac{(q^t;q^\ell)_{n+2s} (q^t;q^\ell)_{n} (q^t;q^\ell)_{n+s}}
{(q^\ell;q^\ell)_{n+2s} (q^\ell;q^\ell)_{n} (q^\ell;q^\ell)_{n+s}} \notag \\
\times & \{q^{\left(n+s\right) \left(\frac{\ell}{2}  \left(n+ s + 1\right) -t \right)} \left[t+\ell s\right]\left[t-\ell s\right] + q^{\left(n + s -1\right) \left(\frac{\ell \left(n+s\right)}{2} -t \right)} \left[\left(n+s\right)\ell \right]\left[t+\left(n+s\right)\ell \right] \}.
    \end{align*}
It is easy to see that for $w<n\leq j-1,$ $\dfrac{\left(q^t;q^\ell\right)_n}{\left(q^\ell;q^\ell\right)_n}$ is congruent to $0$ modulo $\Phi_j(q)$ which yields the following:
\begin{center}
    $\displaystyle \sum_{n=0}^{j-1} A_{\mu}(n) = \sum_{n=0}^{w} A_{\mu} (n)= 0,$
\end{center}
since for $n>w,$ $\left(q^t;q^\ell\right)_n$ contains a factor which vanishes at $q=\mu,$ thereby combined with the case $m=j$ in \eqref{E 3} lead us to the desired result.
Observing the relation
\begin{center}
    $\displaystyle \dfrac{A_{\mu} \left(\gamma j +n \right)}{A_{\mu} \left(\gamma j\right)}  = \lim_{q \to \mu} \dfrac{A_{q} \left(\gamma j +n \right)}{A_{q} \left(\gamma j\right)} =\dfrac{A_{\mu} \left(n \right)}{\left[2 \ell s + t \right]},$
\end{center}
we have
\begin{align*}
    \displaystyle  \sum_{n=0}^{\frac{\left(\ell - 1\right)m -t}{\ell}} A_{\mu}(n) & = \sum_{\gamma=0}^{\frac{\left(\ell - 1\right)m - \ell w-t}{\ell j}-1} \sum_{n=0}^{j-1} A_{\mu}(\gamma j + n) + \sum_{n=0}^{w} A_{\mu} \left( \frac{\left(\ell - 1\right)m - \ell w-t}{\ell } + n \right)\\
    & = \frac{1}{\left[2\ell s + t \right]}  \sum_{\gamma=0}^{\frac{\left(\ell - 1\right)m - \ell w-t}{\ell j}-1} A_{\mu} (\gamma j) \sum_{n=0}^{j-1} A_{\mu}(n) + \frac{1}{\left[2\ell s + t \right]} A_{\mu} \left(\frac{\left(\ell - 1\right)m - \ell w-t}{\ell } \right)  \sum_{n=0}^{w} A_{\mu} \left( n \right) \\
    & = 0.
\end{align*}
This implies that $\displaystyle \sum_{n=0}^{\frac{\left(\ell - 1\right)m -t}{\ell}} A_{q}(n)$ is congruent to $0$ modulo $\Phi_j(q).$ We know that every cyclotomic polynomial $\Phi_j(q)$ is irreducible in the ring $\mathbb{Z}[q],$ we establish that the left-hand side of \eqref{E 2} is congruent to $0$ modulo
\begin{center}
    $\displaystyle  \prod_{j | m , j>1} \Phi_j(q) = \left[m \right].$
\end{center}
This completes the proof.
\end{proof} 


\end{document}